\newtheorem{theorem}{Theorem}[section]
\newtheorem{lemma}{Lemma}[section]
\newtheorem{corollary}{Corollary}[section]
\numberwithin{equation}{section}
\begin{document}
	\title{New inequalities for operator concave functions involving positive linear maps}
	\author{Shiva Sheybani, Mohsen Erfanian Omidvar, and Hamid Reza Moradi}
	\subjclass[2010]{Primary 47A63, Secondary 47A64, 15A60.}
	\keywords{Operator concave, operator inequalities, positive linear map, Kantorovich inequality, Bellman inequality.} \maketitle
	\begin{abstract}
		The purpose of this paper is to present some general inequalities for operator concave functions which include some known inequalities as a particular case. Among other things, we prove that if $A\in \mathcal{B}\left( \mathcal{H} \right)$ is a positive operator such that $mI\le A\le MI$ for some scalars $0<m<M$ and $\Phi $ is a normalized positive linear map on $\mathcal{B}\left( \mathcal{H} \right)$, then
		\[\begin{aligned}
		{{\left( \frac{M+m}{2\sqrt{Mm}} \right)}^{r}}&\ge {{\left( \frac{\frac{1}{\sqrt{Mm}}\Phi \left( A \right)+\sqrt{Mm}\Phi \left( {{A}^{-1}} \right)}{2} \right)}^{r}} \\ 
		& \ge \frac{\frac{1}{{{\left( Mm \right)}^{\frac{r}{2}}}}\Phi {{\left( A \right)}^{r}}+{{\left( Mm \right)}^{\frac{r}{2}}}\Phi {{\left( {{A}^{-1}} \right)}^{r}}}{2} \\ 
		& \ge \Phi {{\left( A \right)}^{r}}\sharp\Phi {{\left( {{A}^{-1}} \right)}^{r}},  
		\end{aligned}\]
		where $0\le r\le 1$, which nicely extend the operator Kantorovich inequality.
	\end{abstract}
	\pagestyle{myheadings}
	\markboth{\centerline {New inequalities for operator concave functions involving positive linear maps}}
	{\centerline {S. Sheybani, M.E. Omidvar \& H.R. Moradi}}
	\bigskip
	\bigskip
	\section{\bf Introduction and preliminaries}
	\vskip 0.4 true cm
	In this paper we consider operator monotone and convex functions defined on the half
	real line $\left( 0,\infty  \right)$. Let $\mathcal{B}\left( \mathcal{H} \right)$ be the algebra of all bounded linear operators on a complex Hilbert space   and $I$ denote the identity operator. If $A$ is an operator then we denote $Sp\left( A \right)$ its spectrum. An operator $A$ is called {\it positive} if $\left\langle Ax,x \right\rangle \ge 0$ for all $x\in \mathcal{H}$, and we then write $A\ge 0$. By $B\ge A$ we mean that $B-A$ is positive, while $B>A$ means that $B-A$ is strictly positive. A mapping $\Phi $ on $\mathcal{B}\left( \mathcal{H} \right)$ is said to be {\it positive} if $\Phi \left( A \right)\ge 0$ for each $A\ge 0$ and is called {\it normalized} if $\Phi $ preserves the identity operator.
	
	For any strictly positive operator $A,B\in \mathcal{B}\left( \mathcal{H} \right)$ and $v\in \left[ 0,1 \right]$, we write
	\[A{{\nabla }_{v}}B:=\left( 1-v \right)A+vB\quad\text{ and }\quad A{{\sharp}_{v}}B:={{A}^{\frac{1}{2}}}{{\left( {{A}^{-\frac{1}{2}}}B{{A}^{-\frac{1}{2}}} \right)}^{v}}{{A}^{\frac{1}{2}}}.\] 
	For the case $v=\frac{1}{2}$, we write $\nabla $ and $\sharp$, respectively. The operator arithmetic-geometric mean inequality (in short, AM-GM inequality) asserts that $A{{\sharp}_{v}}B\le A{{\nabla }_{v}}B$, for any positive operators $A,B\in \mathcal{B}\left( \mathcal{H} \right)$ and any $v\in \left[ 0,1 \right]$. A real valued function $f$ defined on an interval $J$ is said to be {\it operator convex} (resp. {\it operator concave}) if $f\left( A{{\nabla }_{v}}B \right)\le f\left( A \right){{\nabla }_{v}}f\left( B \right)$ (resp. $f\left( A{{\nabla }_{v}}B \right)\ge f\left( A \right){{\nabla }_{v}}f\left( B \right)$) for all self-adjoint operators $A,B$ with spectra in $J$ and all $v\in \left[ 0,1 \right]$. A continuous real valued function $f$ defined on an interval $J$ is called {\it operator monotone} (more precisely, {\it operator monotone increasing}) if $B\ge A$
	implies that $f\left( B \right)\ge f\left( A \right)$, and {\it operator monotone decreasing} if  $B\ge A$ implies $f\left( B \right)\le f\left( A \right)$ for all self-adjoint operators $A,B$ with spectra in $J$.
	
	During the past decades several formulations, extensions or refinements of the Kantorovich inequality \cite{02} in various settings have been introduced by many mathematicians; see \cite{lin,6,moradi,10} and references therein.
	
	Let $A\in \mathcal{B}\left( \mathcal{H} \right)$ be a positive operator such that $mI\le A\le MI$ for some scalars $0<m<M$ and $\Phi $ be a normalized positive linear map on $\mathcal{B}\left( \mathcal{H} \right)$, then
	\begin{equation}\label{16}
	\Phi \left( {{A}^{-1}} \right)\sharp \Phi \left( A \right)\le \frac{M+m}{2\sqrt{Mm}}.
	\end{equation}
	In addition
	\begin{equation}\label{15}
	\Phi \left( A \right)\sharp \Phi \left( B \right)\le \frac{M+m}{2\sqrt{Mm}}\Phi \left( A\sharp B \right),
	\end{equation}
	whenever ${{m}^{2}}A\le B\le {{M}^{2}}A$ and $0<m<M$. The first inequality goes back to Nakamoto and Nakamura in the 1996's \cite{9}, the second is more general and has been proved only in 2009 by Lee \cite{8} (its matrix version).
	
	In Sec. \ref{s2}, we first extend \eqref{15}, then as an application, we obtain a generalization of \eqref{16}. In Sec. \ref{s3}, we use elementary operations and give some inequalities related to the Bellman type.
	\section{\bf Some operator inequalities involving positive linear map}\label{s2}
	\vskip 0.4 true cm
	We prove the following new result, from which \eqref{15} directly follows:
	\begin{theorem}\label{13}
		Let $A,B\in \mathcal{B}\left( \mathcal{H} \right)$ be two strictly positive operators such that $m_{1}^{2}I\le A\le M_{1}^{2}I$, $m_{2}^{2}I\le B\le M_{2}^{2}I$ for some positive scalars ${{m}_{1}}<{{M}_{1}}$, ${{m}_{2}}<{{M}_{2}}$, and let $\Phi $ be a normalized positive linear map on $\mathcal{B}\left( \mathcal{H} \right)$. If $f$ is an operator monotone, then   
		\[\begin{aligned}
		f\left( \left( \frac{M+m}{2} \right)\Phi \left( A\sharp B \right) \right)&\ge f\left( \frac{Mm\Phi \left( A \right)+\Phi \left( B \right)}{2} \right) \\ 
		& \ge \frac{f\left( Mm\Phi \left( A \right) \right)+f\left( \Phi \left( B \right) \right)}{2} \\ 
		& \ge f\left( Mm\Phi \left( A \right) \right)\sharp f\left( \Phi \left( B \right) \right), 
		\end{aligned}\]
		where $m=\frac{{{m}_{2}}}{{{M}_{1}}}$ and $M=\frac{{{M}_{2}}}{{{m}_{1}}}$.
	\end{theorem}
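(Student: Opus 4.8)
\emph{Proof proposal.} The plan is to establish the three inequalities in the displayed chain from top to bottom; only the first one needs real work, the other two being immediate from standard facts about operator monotone functions. A convenient device is the strictly positive operator $C:=A^{-1/2}BA^{-1/2}$, for which $A\sharp B=A^{1/2}C^{1/2}A^{1/2}$ and $B=A^{1/2}CA^{1/2}$.

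First I would locate the spectrum of $C$. From $A\le M_{1}^{2}I$ we get $A^{-1}\ge M_{1}^{-2}I$, and together with $B\ge m_{2}^{2}I$ this gives $C\ge m_{2}^{2}A^{-1}\ge (m_{2}/M_{1})^{2}I=m^{2}I$; symmetrically, $A\ge m_{1}^{2}I$ and $B\le M_{2}^{2}I$ give $C\le M^{2}I$. Writing $D:=C^{1/2}$, we thus have $mI\le D\le MI$, so the elementary scalar fact $(t-m)(t-M)\le 0$ for $t\in[m,M]$ lifts — the factors $D-mI$ and $D-MI$ commute and have opposite sign — to $(D-mI)(D-MI)\le 0$, i.e. $D^{2}+MmI\le (M+m)D$, that is $C+MmI\le (M+m)C^{1/2}$. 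Conjugating both sides by $A^{1/2}$ yields $B+MmA\le (M+m)(A\sharp B)$. Applying the normalized positive linear map $\Phi$ (linearity together with order preservation) and dividing by $2$ gives
\[
\frac{Mm\,\Phi(A)+\Phi(B)}{2}\le \frac{M+m}{2}\,\Phi(A\sharp B).
\]
Since $A,B$ are strictly positive and $\Phi$ is normalized and positive, every operator appearing here has spectrum in $(0,\infty)$ (for instance $A\sharp B\ge mA\ge mm_{1}^{2}I$, so $\Phi(A\sharp B)\ge mm_{1}^{2}I$), hence the operator monotone increasing function $f$ may be applied to both sides, which is precisely the first estimate of the theorem.

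For the middle inequality I would invoke the well-known fact (going back to L\"owner) that an operator monotone function on $(0,\infty)$ is automatically operator concave; applying operator concavity with weight $v=\tfrac12$ to $X:=Mm\,\Phi(A)$ and $Y:=\Phi(B)$ gives $f\!\left(\frac{X+Y}{2}\right)\ge \frac{f(X)+f(Y)}{2}$. The last inequality is then just the operator AM--GM inequality $X\nabla Y\ge X\sharp Y$ recorded in the preliminaries, applied to the positive operators $f(X)$ and $f(Y)$ (here we use that $f$ is non-negative, as in the motivating case $f(t)=t^{r}$, $0\le r\le 1$, so that the geometric mean is defined).

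The only delicate point is the passage to $B+MmA\le (M+m)(A\sharp B)$: one cannot manipulate $A\sharp B$ directly, and the whole trick is to transfer the problem to $C=A^{-1/2}BA^{-1/2}$, where $\sharp$ becomes an ordinary square root so that the scalar quadratic estimate on $[m,M]$ applies, and only afterwards conjugate back by $A^{1/2}$. One must also check at each stage that both sides are strictly positive operators with spectrum in the domain of $f$, which is exactly where strict positivity of $A,B$ and normalization of $\Phi$ are used. The remaining ingredients — operator concavity of operator monotone functions and the operator AM--GM inequality — are standard.
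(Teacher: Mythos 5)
Your proposal is correct and follows essentially the same route as the paper's proof: localize the spectrum of $\left( {{A}^{-\frac{1}{2}}}B{{A}^{-\frac{1}{2}}} \right)^{\frac{1}{2}}$ in $\left[ m,M \right]$, derive $\left( M+m \right)\left( A\sharp B \right)\ge MmA+B$, apply $\Phi$ and then the operator monotone $f$, and finish with operator concavity of operator monotone functions and the AM--GM inequality. You merely supply details the paper leaves implicit (the spectral bounds on $C$ and the quadratic argument $(D-mI)(D-MI)\le 0$), and you rightly flag the tacit assumption that $f\ge 0$ is needed for the final geometric mean to make sense.
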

	\begin{proof}
		According to the assumption, we have
		\begin{equation*}
		mI\le {{\left( {{A}^{-\frac{1}{2}}}B{{A}^{-\frac{1}{2}}} \right)}^{\frac{1}{2}}}\le MI,
		\end{equation*}
		it follows that
		\begin{equation*}
		\left( M+m \right){{\left( {{A}^{-\frac{1}{2}}}B{{A}^{-\frac{1}{2}}} \right)}^{\frac{1}{2}}}\ge MmI+{{A}^{-\frac{1}{2}}}B{{A}^{-\frac{1}{2}}}.
		\end{equation*}
		The above inequality then implies
		\[\left( \frac{M+m}{2} \right)A\sharp B\ge \frac{MmA+B}{2}.\]
		Using  the  hypotheses  made  about $\Phi $, 
		\[\left( \frac{M+m}{2} \right)\Phi \left( A\sharp B \right)\ge \frac{Mm\Phi \left( A \right)+\Phi \left( B \right)}{2}.\]
		Thus we have
		\[\begin{aligned}
		f\left( \left( \frac{M+m}{2} \right)\Phi \left( A\sharp B \right) \right)&\ge f\left( \frac{Mm\Phi \left( A \right)+\Phi \left( B \right)}{2} \right) \quad \text{(since $f$ is operator monotone)}\\ 
		& \ge \frac{f\left( Mm\Phi \left( A \right) \right)+f\left( \Phi \left( B \right) \right)}{2} \quad \text{(by \cite[Corollary 1.12]{1})}\\ 
		& \ge f\left( Mm\Phi \left( A \right) \right)\sharp f\left( \Phi \left( B \right) \right)  \quad \text{(by AM-GM inequality)},
		\end{aligned}\]
		which is the statement of the theorem.
	\end{proof}
	We complement Theorem \ref{13} by proving the following.
	\begin{theorem}\label{14}
		Let $A,B\in \mathcal{B}\left( \mathcal{H} \right)$ be two strictly positive operators such that $m_{1}^{2}I\le A\le M_{1}^{2}$I, $m_{2}^{2}I\le B\le M_{2}^{2}I$ for some scalars ${{m}_{1}}<{{M}_{1}}$, ${{m}_{2}}<{{M}_{2}}$, and let $\Phi $ be a normalized positive linear map on $\mathcal{B}\left( \mathcal{H} \right)$. If $g$ is an operator monotone decreasing, then  
		\[\begin{aligned}
		g\left( \left( \frac{M+m}{2} \right)\Phi \left( A\sharp B \right) \right)&\le g\left( \frac{Mm\Phi \left( A \right)+\Phi \left( B \right)}{2} \right) \\ 
		& \le {{\left\{ \frac{g{{\left( Mm\Phi \left( A \right) \right)}^{-1}}+g{{\left( \Phi \left( B \right) \right)}^{-1}}}{2} \right\}}^{-1}} \\ 
		& \le g\left( Mm\Phi \left( A \right) \right)\sharp g\left( \Phi \left( B \right) \right), 
		\end{aligned}\]
		where $m=\frac{{{m}_{2}}}{{{M}_{1}}}$ and $M=\frac{{{M}_{2}}}{{{m}_{1}}}$.
	\end{theorem}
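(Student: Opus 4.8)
The plan is to reduce Theorem \ref{14} to Theorem \ref{13} by passing to reciprocals. Exactly as in the proof of Theorem \ref{13}, the hypotheses $m_1^2 I\le A\le M_1^2 I$ and $m_2^2 I\le B\le M_2^2 I$ give $mI\le\left(A^{-\frac{1}{2}}BA^{-\frac{1}{2}}\right)^{\frac{1}{2}}\le MI$ with $m=\frac{m_2}{M_1}$ and $M=\frac{M_2}{m_1}$; then the scalar estimate $(M-t)(t-m)\ge 0$ on $[m,M]$, the functional calculus, conjugation by $A^{\frac{1}{2}}$, and the positivity and linearity of $\Phi$ together produce the operator inequality
\[\left(\frac{M+m}{2}\right)\Phi(A\sharp B)\ge\frac{Mm\Phi(A)+\Phi(B)}{2}.\]
This is the common starting point; only the behaviour under $g$ now differs.

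Next I would set $h:=1/g$. Since $g$ is a positive operator monotone decreasing function on $(0,\infty)$, for strictly positive operators $P\le Q$ one has $g(P)\ge g(Q)$, and as $T\mapsto T^{-1}$ is order reversing on strictly positive operators this yields $h(P)=g(P)^{-1}\le g(Q)^{-1}=h(Q)$; hence $h$ is a positive operator monotone (increasing) function, and Theorem \ref{13} applies with $f$ replaced by $h$. Writing $X:=Mm\Phi(A)$, $Y:=\Phi(B)$, $C:=\left(\frac{M+m}{2}\right)\Phi(A\sharp B)$ and $D:=\frac{X+Y}{2}$, Theorem \ref{13} gives
\[h(C)\ge h(D)\ge\frac{h(X)+h(Y)}{2}\ge h(X)\sharp h(Y),\]
and, since $h(Z)=g(Z)^{-1}$ for every strictly positive $Z$ by the functional calculus, this reads
\[g(C)^{-1}\ge g(D)^{-1}\ge\frac{g(X)^{-1}+g(Y)^{-1}}{2}\ge g(X)^{-1}\sharp g(Y)^{-1}.\]

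Finally I would apply $T\mapsto T^{-1}$ to the whole chain. Order reversal turns it into
\[g(C)\le g(D)\le\left\{\frac{g(X)^{-1}+g(Y)^{-1}}{2}\right\}^{-1}\le\left(g(X)^{-1}\sharp g(Y)^{-1}\right)^{-1},\]
and invoking the identity $(U^{-1}\sharp V^{-1})^{-1}=U\sharp V$ for strictly positive $U,V$ on the last term gives precisely the asserted inequalities of Theorem \ref{14}. No step here is a genuine obstacle: beyond Theorem \ref{13} the argument uses only that the reciprocal of a positive operator monotone decreasing function is operator monotone, that operator inversion is order reversing, and that the geometric mean commutes with inversion. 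The one structural point worth isolating is simply that reciprocation converts the decreasing case into the increasing case already settled in Theorem \ref{13}; everything else is routine.
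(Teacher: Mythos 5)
Your proposal is correct and follows essentially the same route as the paper: replace $g$ by its reciprocal $h=1/g$, which is operator monotone increasing, apply Theorem \ref{13} to $h$, and then invert the resulting chain using the order-reversal of $T\mapsto T^{-1}$ together with the identity $\left(U^{-1}\sharp V^{-1}\right)^{-1}=U\sharp V$. The only difference is cosmetic — you spell out why $1/g$ is operator monotone, a fact the paper simply asserts.
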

	\begin{proof}
		Since $g$ is operator monotone decreasing on $\left( 0,\infty  \right)$, so $\frac{1}{g}$ is operator monotone on $\left( 0,\infty  \right)$. Now by applying Theorem \ref{13} for $f=\frac{1}{g}$, we have
		\[\begin{aligned}
		g{{\left( \left( \frac{M+m}{2} \right)\Phi \left( A\sharp B \right) \right)}^{-1}}&\ge g{{\left( \frac{Mm\Phi \left( A \right)+\Phi \left( B \right)}{2} \right)}^{-1}} \\ 
		& \ge \frac{g{{\left( Mm\Phi \left( A \right) \right)}^{-1}}+g{{\left( \Phi \left( B \right) \right)}^{-1}}}{2} \\ 
		& \ge g{{\left( Mm\Phi \left( A \right) \right)}^{-1}}\sharp g{{\left( \Phi \left( B \right) \right)}^{-1}}.  
		\end{aligned}\]
		Taking the inverse, we get
		\[\begin{aligned}
		g\left( \left( \frac{M+m}{2} \right)\Phi \left( A\sharp B \right) \right)&\le g\left( \frac{Mm\Phi \left( A \right)+\Phi \left( B \right)}{2} \right) \\ 
		& \le {{\left\{ \frac{g{{\left( Mm\Phi \left( A \right) \right)}^{-1}}+g{{\left( \Phi \left( B \right) \right)}^{-1}}}{2} \right\}}^{-1}} \\ 
		& \le {{\left\{ g{{\left( Mm\Phi \left( A \right) \right)}^{-1}}\sharp g{{\left( \Phi \left( B \right) \right)}^{-1}} \right\}}^{-1}} \\ 
		& =g\left( Mm\Phi \left( A \right) \right)\sharp g\left( \Phi \left( B \right) \right), 
		\end{aligned}\]
		proving the main assertion of the theorem.
	\end{proof}
	As a byproduct of Theorems \ref{13} and \ref{14}, we have the following result.
	\begin{corollary}\label{19}
		Under the assumptions of Theorem \ref{13}.
		\begin{itemize}
			\item[(i)]  If $0\le r\le 1$, then
			\[\begin{aligned}
			{{\left( \frac{M+m}{2\sqrt{Mm}} \right)}^{r}}\Phi {{\left( A\sharp B \right)}^{r}}&\ge {{\left( \frac{Mm\Phi \left( A \right)+\Phi \left( B \right)}{2\sqrt{Mm}} \right)}^{r}} \\ 
			& \ge \frac{{{\left( Mm \right)}^{r}}\Phi {{\left( A \right)}^{r}}+\Phi {{\left( B \right)}^{r}}}{2{{\left( Mm \right)}^{\frac{r}{2}}}} \\ 
			& \ge \Phi {{\left( A \right)}^{r}}\sharp \Phi {{\left( B \right)}^{r}}.  
			\end{aligned}\]
			The important special case
			\[\frac{M+m}{2\sqrt{Mm}}\Phi \left( A\sharp B \right)\ge \frac{Mm\Phi \left( A \right)+\Phi \left( B \right)}{2\sqrt{Mm}}\ge \Phi \left( A \right)\sharp\Phi \left( B \right),\]
			was observed by Moslehian et al. \cite{10} (see \cite[Theorem 2.5]{moradi} for much stronger result). 
			\item[(ii)] If $-1\le r\le 0$, then
			\[\begin{aligned}
			{{\left( \frac{M+m}{2\sqrt{Mm}} \right)}^{r}}\Phi {{\left( A\sharp B \right)}^{r}}&\le {{\left( \frac{Mm\Phi \left( A \right)+\Phi \left( B \right)}{2\sqrt{Mm}} \right)}^{r}} \\ 
			& \le \frac{1}{{{\left( Mm \right)}^{\frac{r}{2}}}}{{\left\{ \frac{{{\left( Mm \right)}^{-r}}\Phi {{\left( A \right)}^{-r}}+\Phi {{\left( B \right)}^{-r}}}{2} \right\}}^{-1}} \\ 
			& \le \Phi {{\left( A \right)}^{r}}\sharp\Phi {{\left( B \right)}^{r}}.  
			\end{aligned}\]
			
		\end{itemize}
	\end{corollary}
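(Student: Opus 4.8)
The plan is to read off both parts directly from Theorems \ref{13} and \ref{14}, specializing the operator monotone (resp.\ operator monotone decreasing) function to a power function and then tidying up with two elementary scalar identities: $(cX)^{r}=c^{r}X^{r}$ and $(cX)\sharp Y=\sqrt{c}\,(X\sharp Y)$ for a positive scalar $c$, the latter being immediate from $(cX)^{1/2}=c^{1/2}X^{1/2}$ and the definition of $\sharp$.

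For part (i), I would take $f(t)=t^{r}$ with $0\le r\le 1$, which is operator monotone on $(0,\infty)$ by the L\"owner--Heinz inequality, and apply Theorem \ref{13} to the strictly positive operators $Mm\Phi(A)$, $\Phi(B)$, $\left(\frac{M+m}{2}\right)\Phi(A\sharp B)$ and $\frac{Mm\Phi(A)+\Phi(B)}{2}$ (all strictly positive because $A$, $B$, $A\sharp B$ are strictly positive and $\Phi$ is normalized and positive). Using $(cX)^{r}=c^{r}X^{r}$ to extract $\frac{M+m}{2}$ on the left and $Mm$ in the third term, and $(cX)\sharp Y=\sqrt c\,(X\sharp Y)$ to rewrite $\left(Mm\Phi(A)\right)^{r}\sharp\Phi(B)^{r}=(Mm)^{r/2}\bigl(\Phi(A)^{r}\sharp\Phi(B)^{r}\bigr)$, Theorem \ref{13} becomes
\[\left(\frac{M+m}{2}\right)^{r}\Phi(A\sharp B)^{r}\ge\left(\frac{Mm\Phi(A)+\Phi(B)}{2}\right)^{r}\ge\frac{(Mm)^{r}\Phi(A)^{r}+\Phi(B)^{r}}{2}\ge (Mm)^{r/2}\bigl(\Phi(A)^{r}\sharp\Phi(B)^{r}\bigr).\]
Dividing the whole chain by the positive scalar $(Mm)^{r/2}$ and simplifying $\frac{1}{(Mm)^{r/2}}\left(\frac{M+m}{2}\right)^{r}=\left(\frac{M+m}{2\sqrt{Mm}}\right)^{r}$ and $\frac{1}{(Mm)^{r/2}}\left(\frac{Mm\Phi(A)+\Phi(B)}{2}\right)^{r}=\left(\frac{Mm\Phi(A)+\Phi(B)}{2\sqrt{Mm}}\right)^{r}$ gives exactly the asserted inequality, and the displayed ``special case'' is the instance $r=1$.

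For part (ii), I would take $g(t)=t^{r}$ with $-1\le r\le 0$, which is operator monotone decreasing on $(0,\infty)$: writing $r=-s$ with $s\in[0,1]$, $g(t)=(t^{-1})^{s}$ is the composition of the operator monotone decreasing map $t\mapsto t^{-1}$ with the operator monotone increasing map $u\mapsto u^{s}$. Applying Theorem \ref{14} to the same four strictly positive operators (here $g(X)^{-1}=X^{-r}$, legitimate since $-r\in[0,1]$), and again using the two scalar identities --- in particular $(Mm\Phi(A))^{-r}=(Mm)^{-r}\Phi(A)^{-r}$ and $(Mm\Phi(A))^{r}\sharp\Phi(B)^{r}=(Mm)^{r/2}\bigl(\Phi(A)^{r}\sharp\Phi(B)^{r}\bigr)$ --- one obtains
\[\left(\frac{M+m}{2}\right)^{r}\Phi(A\sharp B)^{r}\le\left(\frac{Mm\Phi(A)+\Phi(B)}{2}\right)^{r}\le\left\{\frac{(Mm)^{-r}\Phi(A)^{-r}+\Phi(B)^{-r}}{2}\right\}^{-1}\le (Mm)^{r/2}\bigl(\Phi(A)^{r}\sharp\Phi(B)^{r}\bigr).\]
Dividing through by the positive scalar $(Mm)^{r/2}$ and simplifying the first two terms exactly as in part (i) produces the claimed chain.

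The argument is essentially mechanical once Theorems \ref{13} and \ref{14} are available; I expect the only real care-point to be the bookkeeping of the scalar prefactors as they pass through the $r$-th power and, above all, through the operator geometric mean --- i.e.\ keeping straight that $(cX)\sharp Y=\sqrt c\,(X\sharp Y)$ rather than $c\,(X\sharp Y)$. A secondary routine check is that every operator to which $f$ or $g$ is applied is strictly positive, which follows from the spectral bounds $m_1^{2}I\le A\le M_1^{2}I$, $m_2^{2}I\le B\le M_2^{2}I$ together with positivity and normality of $\Phi$ and monotonicity of $\sharp$ in each argument.
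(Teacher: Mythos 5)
Your proposal is correct and matches the paper's (implicit) argument exactly: the paper states Corollary \ref{19} as a direct byproduct of Theorems \ref{13} and \ref{14} with $f(t)=t^{r}$ (resp.\ $g(t)=t^{r}$), and your scalar bookkeeping via $(cX)^{r}=c^{r}X^{r}$ and $(cX)\sharp Y=\sqrt{c}\,(X\sharp Y)$, followed by division by $(Mm)^{r/2}$, is precisely what is needed to pass from those theorems to the stated chains.
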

	Our next result is a straightforward application of Theorems \ref{13} and \ref{14}.
	\begin{corollary}\label{02}
		Let $A\in \mathcal{B}\left( \mathcal{H} \right)$ be positive operator such that $mI\le A\le MI$ for some scalars $0<m<M$ and $\Phi $ be a normalized positive linear map on $\mathcal{B}\left( \mathcal{H} \right)$.
		\begin{itemize}
			\item[(i)]  If $f$ is an operator monotone, then
			\[\begin{aligned}
			f\left( \frac{M+m}{2Mm} \right)&\ge f\left( \frac{\frac{1}{Mm}\Phi \left( A \right)+\Phi \left( {{A}^{-1}} \right)}{2} \right) \\ 
			& \ge \frac{f\left( \frac{1}{Mm}\Phi \left( A \right) \right)+f\left( \Phi \left( {{A}^{-1}} \right) \right)}{2} \\ 
			& \ge f\left( \frac{1}{Mm}\Phi \left( A \right) \right)\sharp f\left( \Phi \left( {{A}^{-1}} \right) \right).  
			\end{aligned}\]
			\item[(ii)] If $g$ is an operator monotone decreasing, then
			\[\begin{aligned}
			g\left( \frac{M+m}{2Mm} \right)&\le g\left( \frac{\frac{1}{Mm}\Phi \left( A \right)+\Phi \left( {{A}^{-1}} \right)}{2} \right) \\ 
			& \le {{\left\{ \frac{g{{\left( \frac{1}{Mm}\Phi \left( A \right) \right)}^{-1}}+g{{\left( \Phi \left( {{A}^{-1}} \right) \right)}^{-1}}}{2} \right\}}^{-1}} \\ 
			& \le g\left( \frac{1}{Mm}\Phi \left( A \right) \right)\sharp g\left( \Phi \left( {{A}^{-1}} \right) \right).  
			\end{aligned}\]
		\end{itemize}
	\end{corollary}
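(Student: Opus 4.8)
The plan is to derive Corollary \ref{02} simply by specializing Theorems \ref{13} and \ref{14} to the case $B = A^{-1}$. First I would record the relevant spectral bounds. From $mI \le A \le MI$ one may take $m_1 = \sqrt{m}$ and $M_1 = \sqrt{M}$; the operator $A^{-1}$ then satisfies $\frac{1}{M}I \le A^{-1} \le \frac{1}{m}I$, so one takes $m_2 = 1/\sqrt{M}$ and $M_2 = 1/\sqrt{m}$. Consequently, the quantities $m = m_2/M_1$ and $M = M_2/m_1$ that appear in Theorems \ref{13} and \ref{14} become, for this choice, $1/M$ and $1/m$ respectively. In particular the product ``$Mm$'' occurring in those theorems equals $1/(Mm)$ here, and the average ``$(M+m)/2$'' becomes $(M+m)/(2Mm)$.

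Next I would invoke the elementary identity $A\sharp A^{-1} = A^{1/2}\left(A^{-1/2}A^{-1}A^{-1/2}\right)^{1/2}A^{1/2} = A^{1/2}\left(A^{-2}\right)^{1/2}A^{1/2} = I$, together with the normalization $\Phi(I) = I$, to conclude that $\Phi(A\sharp A^{-1}) = I$. Substituting $B = A^{-1}$ into the chain of inequalities of Theorem \ref{13} (for $f$ operator monotone), the left-most term becomes $f\!\left(\frac{M+m}{2Mm}I\right) = f\!\left(\frac{M+m}{2Mm}\right)I$, while the term $\frac{Mm\,\Phi(A)+\Phi(B)}{2}$ turns into $\frac{\frac{1}{Mm}\Phi(A)+\Phi(A^{-1})}{2}$, and the remaining two terms become $\frac{f\left(\frac{1}{Mm}\Phi(A)\right)+f\left(\Phi(A^{-1})\right)}{2}$ and $f\!\left(\frac{1}{Mm}\Phi(A)\right)\sharp f\!\left(\Phi(A^{-1})\right)$. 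This is exactly part (i). Feeding $B = A^{-1}$ into Theorem \ref{14} in the same way produces part (ii) verbatim.

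The argument is essentially a matter of bookkeeping; the one point that requires a moment's care is keeping the two roles of the symbols $m$ and $M$ distinct — in the corollary they bound the operator $A$, whereas when Theorem \ref{13} (or \ref{14}) is applied, its internal ``$m$'' and ``$M$'' are the reciprocals $1/M$ and $1/m$ — and then verifying that these combine to yield precisely the constants $\frac{M+m}{2Mm}$ and $\frac{1}{Mm}$ displayed in the statement. I do not anticipate any genuine obstacle beyond this index-chasing.
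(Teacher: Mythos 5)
Your proposal is correct and matches the paper's intent exactly: the paper offers no written proof, stating only that the corollary is ``a straightforward application of Theorems \ref{13} and \ref{14},'' and your substitution $B=A^{-1}$ with $m_1=\sqrt{m}$, $M_1=\sqrt{M}$, $m_2=1/\sqrt{M}$, $M_2=1/\sqrt{m}$, together with $A\sharp A^{-1}=I$ and $\Phi(I)=I$, is precisely that application. The bookkeeping of the constants ($Mm\mapsto 1/(Mm)$, $(M+m)/2\mapsto (M+m)/(2Mm)$) checks out.
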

	In the same vein as in Corollary \ref{19}, we have the following consequences.
	\begin{corollary}
		Under the assumptions of Corollary \ref{02}.
		\begin{itemize}
			
			\item[(i)] If $0\le r\le 1$, then
			\[\begin{aligned}
			{{\left( \frac{M+m}{2\sqrt{Mm}} \right)}^{r}}&\ge {{\left( \frac{\frac{1}{\sqrt{Mm}}\Phi \left( A \right)+\sqrt{Mm}\Phi \left( {{A}^{-1}} \right)}{2} \right)}^{r}} \\ 
			& \ge \frac{\frac{1}{{{\left( Mm \right)}^{\frac{r}{2}}}}\Phi {{\left( A \right)}^{r}}+{{\left( Mm \right)}^{\frac{r}{2}}}\Phi {{\left( {{A}^{-1}} \right)}^{r}}}{2} \\ 
			& \ge \Phi {{\left( A \right)}^{r}}\sharp\Phi {{\left( {{A}^{-1}} \right)}^{r}}.  
			\end{aligned}\]
			For the special case in which $r=1$, we have 
			\[\frac{M+m}{2\sqrt{Mm}}\ge \frac{\frac{1}{\sqrt{Mm}}\Phi \left( A \right)+\sqrt{Mm}\Phi \left( {{A}^{-1}} \right)}{2}\ge \Phi \left( A \right)\sharp\Phi \left( {{A}^{-1}} \right).\]	
			\item[(ii)] If $-1\le r\le 0$, then
			\[\begin{aligned}
			{{\left( \frac{M+m}{2\sqrt{Mm}} \right)}^{r}}&\le {{\left( \frac{\frac{1}{\sqrt{Mm}}\Phi \left( A \right)+\sqrt{Mm}\Phi \left( {{A}^{-1}} \right)}{2} \right)}^{r}} \\ 
			& \le {{\left\{ \frac{{{\left( Mm \right)}^{r}}\Phi {{\left( A \right)}^{-r}}+\Phi {{\left( {{A}^{-1}} \right)}^{-r}}}{2{{\left( Mm \right)}^{\frac{r}{2}}}} \right\}}^{-1}} \\ 
			& \le \Phi {{\left( A \right)}^{r}}\sharp\Phi {{\left( {{A}^{-1}} \right)}^{r}}.  
			\end{aligned}\]
		\end{itemize}
	\end{corollary}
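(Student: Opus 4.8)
The plan is to obtain both parts directly from Corollary~\ref{02} by specializing its function to a power $t\mapsto t^{r}$ and then rescaling the resulting chain by a positive constant.

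For part~(i), the L\"owner--Heinz inequality guarantees that $f(t)=t^{r}$ is operator monotone on $(0,\infty)$ whenever $0\le r\le 1$, so Corollary~\ref{02}(i) applies to this $f$. Using the scalar homogeneity $(cX)^{r}=c^{r}X^{r}$ for $c>0$ and $X>0$, together with $(aX)\sharp(bY)=\sqrt{ab}\,(X\sharp Y)$, the four terms of the chain in Corollary~\ref{02}(i) become, respectively, $\bigl(\tfrac{M+m}{2Mm}\bigr)^{r}$, $\bigl(\tfrac12(\tfrac{1}{Mm}\Phi(A)+\Phi(A^{-1}))\bigr)^{r}$, $\tfrac12\bigl((Mm)^{-r}\Phi(A)^{r}+\Phi(A^{-1})^{r}\bigr)$, and $(Mm)^{-r/2}\bigl(\Phi(A)^{r}\sharp\Phi(A^{-1})^{r}\bigr)$. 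Multiplying the whole chain through by the positive scalar $(Mm)^{r/2}$, which preserves the operator order, and simplifying the exponents --- e.g.\ $(Mm)^{r/2}\bigl(\tfrac{M+m}{2Mm}\bigr)^{r}=\bigl(\tfrac{M+m}{2\sqrt{Mm}}\bigr)^{r}$, $(Mm)^{r/2}\bigl(\tfrac12(\tfrac{1}{Mm}\Phi(A)+\Phi(A^{-1}))\bigr)^{r}=\bigl(\tfrac12(\tfrac{1}{\sqrt{Mm}}\Phi(A)+\sqrt{Mm}\,\Phi(A^{-1}))\bigr)^{r}$, and $(Mm)^{r/2}(Mm)^{-r/2}=1$ at the right end --- produces exactly the asserted inequality. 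The special case $r=1$ is then immediate, the two middle terms being equal there.

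For part~(ii), note that $g(t)=t^{r}$ with $-1\le r\le 0$ is operator monotone decreasing on $(0,\infty)$: it is the composition $t\mapsto(t^{-r})^{-1}$ of the operator monotone map $t\mapsto t^{-r}$ (here $0\le -r\le 1$, again L\"owner--Heinz) with the operator monotone decreasing map $t\mapsto t^{-1}$. Feeding this $g$ into Corollary~\ref{02}(ii) and using the same homogeneity identities, now also with $g(cX)^{-1}=c^{-r}X^{-r}$, the four terms become $\bigl(\tfrac{M+m}{2Mm}\bigr)^{r}$, $\bigl(\tfrac12(\tfrac{1}{Mm}\Phi(A)+\Phi(A^{-1}))\bigr)^{r}$, $\bigl\{\tfrac12\bigl((Mm)^{r}\Phi(A)^{-r}+\Phi(A^{-1})^{-r}\bigr)\bigr\}^{-1}$, and $(Mm)^{-r/2}\bigl(\Phi(A)^{r}\sharp\Phi(A^{-1})^{r}\bigr)$. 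Multiplying through by $(Mm)^{r/2}$ and absorbing it into the harmonic-type term via $c\{Z\}^{-1}=\{c^{-1}Z\}^{-1}$ then yields the stated chain.

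I expect no genuine obstacle here, since the substance is carried entirely by Corollary~\ref{02}. What requires care is only invoking the correct operator monotonicity of $t\mapsto t^{r}$ on the two parameter ranges, and the bookkeeping of the rescaling factor $(Mm)^{r/2}$ as it passes through an $r$-th power, a weighted average, an operator inverse, and a geometric mean, so that the two ends of the chain collapse cleanly to $\bigl(\tfrac{M+m}{2\sqrt{Mm}}\bigr)^{r}$ and $\Phi(A)^{r}\sharp\Phi(A^{-1})^{r}$.
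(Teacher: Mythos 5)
Your proposal is correct and is exactly the argument the paper intends: the corollary is stated as an immediate consequence of Corollary \ref{02}, obtained by taking $f(t)=t^{r}$ (operator monotone for $0\le r\le 1$) and $g(t)=t^{r}$ (operator monotone decreasing for $-1\le r\le 0$) and rescaling the chain by the positive scalar $(Mm)^{r/2}$. Your bookkeeping of the factor through the power, the average, the inverse, and the geometric mean all checks out.
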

	\section{\bf Operator Bellman inequality with negative parameter}\label{s3}
	\vskip 0.4 true cm
	Let $A,B\in \mathcal{B}\left( \mathcal{H} \right)$ be two strictly positive operators and $\Phi $ be a normalized positive linear map on $\mathcal{B}\left( \mathcal{H} \right)$. If $f$ is an operator concave, then for any $v\in \left[ 0,1 \right]$, the following inequality obtained in \cite[Theorem 2.1]{2}:
	\begin{equation}\label{7}
	\Phi \left( f\left( A \right) \right){{\nabla }_{v}}\Phi \left( f\left( B \right) \right)\le f\left( \Phi \left( A{{\nabla }_{v}}B \right) \right).
	\end{equation}
	In the same paper, as an operator version of Bellman inequality \cite{bellman}, the authors showed that
	\begin{equation}\label{17}
	\Phi \left( {{\left( I-A \right)}^{r}}{{\nabla }_{v}}{{\left( I-B \right)}^{r}} \right)\le \Phi {{\left( I-A{{\nabla }_{v}}B \right)}^{r}},
	\end{equation}
	where  $A,B$ are two operator contractions (in the sense that $\left\| A \right\|,\left\| B \right\|\le 1$) and $r,v\in \left[ 0,1 \right]$. 
	
	Under the convexity assumption on $f$, \eqref{17} can be reversed:
	\begin{theorem}
		Let $A,B\in \mathcal{B}\left( \mathcal{H} \right)$ be two contraction operators and $\Phi $ be a normalized positive linear map on $\mathcal{B}\left( \mathcal{H} \right)$. Then
		\begin{equation}\label{2}
		\Phi {{\left( I-A{{\nabla }_{v}}B \right)}^{r}}\le \Phi \left( {{\left( I-A \right)}^{r}}{{\nabla }_{v}}{{\left( I-B \right)}^{r}} \right),
		\end{equation}
		for any $v\in \left[ 0,1 \right]$ and $r\in \left[ -1,0 \right]\cup \left[ 1,2 \right]$.
	\end{theorem}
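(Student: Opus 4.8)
The plan is to reduce the statement to the operator convexity of the power function $t\mapsto t^{r}$. First I would substitute $C:=I-A$ and $D:=I-B$. Since $A$ and $B$ are contractions we have $-I\le A\le I$ and $-I\le B\le I$, hence $0\le C\le 2I$ and $0\le D\le 2I$; in particular $C$ and $D$ are positive with spectra in $[0,2]\subset(0,\infty)$. (For the range $r\in[-1,0)$ the powers $C^{r},D^{r}$ require invertibility, so there one must either assume $\|A\|,\|B\|<1$ or pass to the limit from $(1-\varepsilon)A,(1-\varepsilon)B$ as $\varepsilon\downarrow 0$, using continuity of the functional calculus and of $\Phi$.) The key algebraic identity is
\[
I-A\nabla_v B=I-\big((1-v)A+vB\big)=(1-v)(I-A)+v(I-B)=C\nabla_v D,
\]
so the asserted inequality \eqref{2} is equivalent to $\Phi\big((C\nabla_v D)^{r}\big)\le\Phi\big(C^{r}\nabla_v D^{r}\big)$.

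Next I would invoke the classical fact that $f(t)=t^{r}$ is operator convex on $(0,\infty)$ precisely when $r\in[-1,0]\cup[1,2]$ — which is exactly the range of $r$ in the statement, and explains why the hypothesis on $r$ takes that shape. Applying operator convexity of $f$ to the positive operators $C,D$ and the weight $v\in[0,1]$ gives the pointwise operator inequality
\[
(C\nabla_v D)^{r}=f\big((1-v)C+vD\big)\le(1-v)f(C)+vf(D)=C^{r}\nabla_v D^{r}.
\]

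Finally, since $\Phi$ is a positive linear map it is monotone; applying $\Phi$ to both sides of the last inequality and using linearity, namely $\Phi(C^{r}\nabla_v D^{r})=(1-v)\Phi(C^{r})+v\Phi(D^{r})=\Phi(C^{r})\nabla_v\Phi(D^{r})$, yields $\Phi\big((C\nabla_v D)^{r}\big)\le\Phi\big(C^{r}\nabla_v D^{r}\big)$, which after substituting back $C=I-A$, $D=I-B$ is precisely \eqref{2}. The whole argument is the mirror image of the proof of \eqref{17}, where $t\mapsto t^{r}$ is operator concave for $r\in[0,1]$ and the inequality therefore points the other way. The only genuinely delicate point is the one already flagged: handling non-strict contractions when $r$ is negative, which is dealt with by the routine $\varepsilon$-perturbation and limiting argument; everything else is immediate from operator convexity plus positivity and linearity of $\Phi$.
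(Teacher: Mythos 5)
Your proof is correct and takes essentially the same route as the paper's: both reduce the claim to operator convexity of the relevant power function for $r\in[-1,0]\cup[1,2]$ (you use $t\mapsto t^{r}$ after the substitution $C=I-A$, $D=I-B$; the paper uses $t\mapsto(1-t)^{r}$ directly) and then apply the positive linear map $\Phi$. Two minor differences in your favor: you explicitly address the invertibility of $I-A$, $I-B$ when $r<0$ via an $\varepsilon$-perturbation, a point the paper glosses over, and you correctly omit the Choi--Davis--Jensen step that appears in the paper's proof but is not actually needed for the stated inequality.
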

	\begin{proof}
		If $f$ is operator convex, we have
		\[\begin{aligned}
		f\left( \Phi \left( A{{\nabla }_{v}}B \right) \right)&\le \Phi \left( f\left( A{{\nabla }_{v}}B \right) \right) \quad \text{(by Choi-Davis-Jensen inequality \cite[p. 62]{4})}\\ 
		& \le \Phi \left( f\left( A \right){{\nabla }_{v}}f\left( B \right) \right) \quad \text{(by operator convexity of $f$)}. 
		\end{aligned}\]
		The function $f\left( t \right)={{t}^{r}}$ is operator convex on $\left( 0,\infty  \right)$ for $r\in \left[ -1,0 \right]\cup \left[ 1,2 \right]$ (see \cite[Chapter 1]{4}). It can be verified that $f\left( t \right)={{\left( 1-t \right)}^{r}}$ is operator convex on $\left( 0,1  \right)$ for $r\in \left[ -1,0 \right]\cup \left[ 1,2 \right]$. This implies the desired result \eqref{2}.
	\end{proof}
	However, we are looking for something stronger than \eqref{2}. The principal object of this section is to prove the following:
	\begin{theorem}\label{18}
		Let $A,B\in \mathcal{B}\left( \mathcal{H} \right)$ be two contraction operators and $\Phi $ be a normalized positive linear map on $\mathcal{B}\left( \mathcal{H} \right)$. Then
		\[\begin{aligned}
		\Phi {{\left( I-A{{\nabla }_{v}}B \right)}^{r}}&\le \Phi {{\left( I-A \right)}^{r}}{{\sharp}_{v}}\Phi {{\left( I-B \right)}^{r}} \\ 
		& \le \Phi \left( {{\left( I-A \right)}^{r}}{{\sharp}_{v}}{{\left( I-B \right)}^{r}} \right) \\ 
		& \le \Phi \left( {{\left( I-A \right)}^{r}}{{\nabla }_{v}}{{\left( I-B \right)}^{r}} \right), 
		\end{aligned}\]
		where $v\in \left[ 0,1 \right]$ and $r\in \left[ -1,0 \right]$.
	\end{theorem}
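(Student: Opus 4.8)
The plan is to reduce everything to a statement about two fixed strictly positive operators and then peel off the three inequalities one at a time. Write $X=I-A$ and $Y=I-B$, which are positive invertible (the powers $X^{r}$, $Y^{r}$ with $r<0$ require invertibility, and the boundary case follows by continuity, exactly as in \eqref{2}), and put $P=\Phi(X)=I-\Phi(A)$ and $Q=\Phi(Y)=I-\Phi(B)$, again strictly positive because $\Phi$ is positive and normalized. Linearity and normalization of $\Phi$ give $\Phi(I-A\nabla_v B)=I-\Phi(A)\nabla_v\Phi(B)=P\nabla_v Q$, so the asserted chain becomes
\[(P\nabla_v Q)^{r}\le P^{r}\sharp_v Q^{r},\qquad P^{r}\sharp_v Q^{r}\le\Phi(X^{r}\sharp_v Y^{r}),\qquad\Phi(X^{r}\sharp_v Y^{r})\le\Phi(X^{r}\nabla_v Y^{r}).\]
Throughout I set $s=-r\in[0,1]$ and would freely use: the identity $(C\sharp_v D)^{-1}=C^{-1}\sharp_v D^{-1}$; operator concavity of $t\mapsto t^{s}$ and operator convexity of $t\mapsto t^{-1}$ on $(0,\infty)$; the Choi--Davis--Jensen inequality for normalized positive linear maps; joint monotonicity of $\sharp_v$; order-reversal of $T\mapsto T^{-1}$ on positive invertibles; and the standard estimate $\Phi(C\sharp_v D)\le\Phi(C)\sharp_v\Phi(D)$.

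The two outer inequalities are routine. For the third, the operator AM--GM inequality gives $X^{r}\sharp_v Y^{r}\le X^{r}\nabla_v Y^{r}$, and applying the positive linear map $\Phi$ preserves it. For the first, operator concavity of $t\mapsto t^{s}$ yields $(P\nabla_v Q)^{s}=((1-v)P+vQ)^{s}\ge(1-v)P^{s}+vQ^{s}=P^{s}\nabla_v Q^{s}$, which is $\ge P^{s}\sharp_v Q^{s}$ by operator AM--GM; taking inverses and using $(P^{s}\sharp_v Q^{s})^{-1}=P^{-s}\sharp_v Q^{-s}=P^{r}\sharp_v Q^{r}$ turns this into $(P\nabla_v Q)^{r}\le P^{r}\sharp_v Q^{r}$.

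The middle inequality $P^{r}\sharp_v Q^{r}\le\Phi(X^{r}\sharp_v Y^{r})$ is the step I expect to be the real obstacle: the obvious route—bound $\Phi(X^{s})$ and $\Phi(Y^{s})$ in terms of $P$ and $Q$ and then invoke $\Phi(C\sharp_v D)\le\Phi(C)\sharp_v\Phi(D)$—pushes the two estimates in opposite directions and collapses. The fix is to pass to inverses \emph{first}. Since $X^{r}\sharp_v Y^{r}=X^{-s}\sharp_v Y^{-s}=(X^{s}\sharp_v Y^{s})^{-1}$, Choi--Davis--Jensen for the operator convex function $t^{-1}$ gives
\[\Phi(X^{r}\sharp_v Y^{r})=\Phi\bigl((X^{s}\sharp_v Y^{s})^{-1}\bigr)\ge\bigl(\Phi(X^{s}\sharp_v Y^{s})\bigr)^{-1}.\]
Now $\Phi(X^{s}\sharp_v Y^{s})\le\Phi(X^{s})\sharp_v\Phi(Y^{s})$, and since $t\mapsto t^{s}$ is operator concave, Choi--Davis--Jensen gives $\Phi(X^{s})\le\Phi(X)^{s}=P^{s}$ and $\Phi(Y^{s})\le Q^{s}$, so joint monotonicity of $\sharp_v$ yields $\Phi(X^{s}\sharp_v Y^{s})\le P^{s}\sharp_v Q^{s}=(P^{r}\sharp_v Q^{r})^{-1}$. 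Inverting this last inequality and combining with the previous display gives $\Phi(X^{r}\sharp_v Y^{r})\ge P^{r}\sharp_v Q^{r}$, which is exactly the middle inequality. Concatenating the three inequalities and substituting back $X=I-A$, $Y=I-B$, $P=\Phi(I-A)$, $Q=\Phi(I-B)$ recovers the statement. The only delicate point is the bookkeeping of inversions in this middle step, arranged so that Choi--Davis--Jensen for $t^{-1}$, Choi--Davis--Jensen for $t^{s}$, and the concavity inequality for $\sharp_v$ under positive maps all act in the same direction.
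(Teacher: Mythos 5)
Your proof is correct, and it follows essentially the same route as the paper, which establishes the chain as a general statement (Lemma \ref{3}, inequality \eqref{20}) for an arbitrary operator monotone decreasing $f$ and then sets $f(t)=(1-t)^{r}$; your three reduced inequalities for $X=I-A$, $Y=I-B$ are exactly the three links of \eqref{20}. The one substantive difference is where the first link comes from: the paper cites the Ando--Hiai characterization \eqref{4}, $f(A\nabla_v B)\le f(A)\,\sharp_v\, f(B)$, as a black box, whereas you derive it for the power function from operator concavity of $t^{s}$ ($s=-r$), AM--GM, the identity $(C\sharp_v D)^{-1}=C^{-1}\sharp_v D^{-1}$, and order reversal under inversion --- i.e.\ you reprove the operator log-convexity of $t\mapsto t^{r}$ from scratch, which makes the argument self-contained at the cost of generality. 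For the delicate middle link $\Phi(X)^{r}\sharp_v\Phi(Y)^{r}\le\Phi\left(X^{r}\sharp_v Y^{r}\right)$ your inversion device is literally the paper's: work with the operator monotone $t^{s}$ (the paper's $g=1/f$), combine Choi--Davis--Jensen for $t^{s}$ with joint monotonicity of $\sharp_v$ and Ando's inequality to bound $\Phi(X^{s}\sharp_v Y^{s})$ above by $\Phi(X)^{s}\sharp_v\Phi(Y)^{s}$, then invert and finish with Choi--Davis--Jensen for $t^{-1}$. Your parenthetical on invertibility of $I-A$ is a point the paper glosses over and is worth keeping, since $(I-A)^{r}$ with $r<0$ only makes sense when $I-A$ is strictly positive.
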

	The proof is at the end of this section. The following lemma will play an important role in our proof. 
	\begin{lemma}\label{3}
		Let $A,B\in \mathcal{B}\left( \mathcal{H} \right)$ be two strictly positive operators and $\Phi $ be a normalized positive linear map on $\mathcal{B}\left( \mathcal{H} \right)$. If $f$ is an operator monotone decreasing, then for any $v\in \left[ 0,1 \right]$
		\begin{equation}\label{5}
		f\left( \Phi \left( A{{\nabla }_{v}}B \right) \right)\le f\left( \Phi \left( A \right) \right){{\sharp}_{v}}f\left( \Phi \left( B \right) \right)\le \Phi \left( f\left( A \right) \right){{\nabla }_{v}}\Phi \left( f\left( B \right) \right),
		\end{equation}
		and
		\begin{equation}\label{6}
		f\left( \Phi \left( A{{\nabla }_{v}}B \right) \right)\le \Phi \left( f\left( A \right){{\sharp}_{v}}f\left( B \right) \right)\le \Phi \left( f\left( A \right) \right){{\nabla }_{v}}\Phi \left( f\left( B \right) \right).
		\end{equation}
		More precisely, 
		\begin{equation}\label{20}
		f\left( \Phi \left( A{{\nabla }_{v}}B \right) \right)\le f\left( \Phi \left( A \right) \right){{\sharp}_{v}}f\left( \Phi \left( B \right) \right)\le \Phi \left( f\left( A \right){{\sharp}_{v}}f\left( B \right) \right)\le \Phi \left( f\left( A \right) \right){{\nabla }_{v}}\Phi \left( f\left( B \right) \right).	
		\end{equation}
	\end{lemma}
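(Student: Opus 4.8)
The plan is to prove the four-term chain \eqref{20} first; once \eqref{20} is in hand, both \eqref{5} and \eqref{6} drop out by transitivity (together with one application of the AM--GM inequality for \eqref{5}). Everything rests on a single structural remark: since $f$ is operator monotone decreasing and strictly positive on $(0,\infty)$, its reciprocal $g:=1/f$ is operator monotone increasing and strictly positive on $(0,\infty)$, hence operator concave; consequently $f=1/g$ is operator convex. Throughout I will use: the Choi--Davis--Jensen inequality for the unital positive linear map $\Phi$, that is, $\psi(\Phi(X))\le\Phi(\psi(X))$ for operator convex $\psi$ and the reverse for operator concave $\psi$; the joint monotonicity of the mean $\sharp_v$; the inversion identity $(X\sharp_v Y)^{-1}=X^{-1}\sharp_v Y^{-1}$ for strictly positive $X,Y$; and the Ando inequality $\Phi(X\sharp_v Y)\le\Phi(X)\sharp_v\Phi(Y)$.

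For the first inequality of \eqref{20}, linearity and normalization give $\Phi(A\nabla_v B)=\Phi(A)\nabla_v\Phi(B)$; writing $C=\Phi(A)$, $D=\Phi(B)$ and using operator concavity of $g$ followed by AM--GM, $g(C\nabla_v D)\ge g(C)\nabla_v g(D)\ge g(C)\sharp_v g(D)$. Taking inverses (order-reversing) and applying the inversion identity, $f(C\nabla_v D)\le(g(C)\sharp_v g(D))^{-1}=f(C)\sharp_v f(D)$, which is precisely $f(\Phi(A\nabla_v B))\le f(\Phi(A))\sharp_v f(\Phi(B))$. The last inequality of \eqref{20} is the simplest: AM--GM gives $f(A)\sharp_v f(B)\le f(A)\nabla_v f(B)$, and applying the order-preserving linear map $\Phi$ yields $\Phi(f(A)\sharp_v f(B))\le\Phi(f(A))\nabla_v\Phi(f(B))$.

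The middle inequality $f(\Phi(A))\sharp_v f(\Phi(B))\le\Phi(f(A)\sharp_v f(B))$ is the crux, and I expect it to be the main obstacle: bounding each factor separately by Choi--Davis and monotonicity of $\sharp_v$ only reaches the common majorant $\Phi(f(A))\sharp_v\Phi(f(B))$, beyond which the Ando inequality points the wrong way. The remedy is to pass through $g$ on both sides. On one hand, $f(\Phi(A))\sharp_v f(\Phi(B))=(g(\Phi(A))\sharp_v g(\Phi(B)))^{-1}$; since $\Phi(g(A))\le g(\Phi(A))$ and $\Phi(g(B))\le g(\Phi(B))$ by Choi--Davis for the concave function $g$, monotonicity of $\sharp_v$ and then the Ando inequality give $g(\Phi(A))\sharp_v g(\Phi(B))\ge\Phi(g(A))\sharp_v\Phi(g(B))\ge\Phi(g(A)\sharp_v g(B))$, so inverting yields $f(\Phi(A))\sharp_v f(\Phi(B))\le\big(\Phi(g(A)\sharp_v g(B))\big)^{-1}$. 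On the other hand, operator convexity of $t\mapsto t^{-1}$ and Choi--Davis give $\big(\Phi(g(A)\sharp_v g(B))\big)^{-1}\le\Phi\big((g(A)\sharp_v g(B))^{-1}\big)=\Phi(f(A)\sharp_v f(B))$, where the last equality is again the inversion identity applied inside $\Phi$; combining the two estimates proves the middle inequality of \eqref{20}. Finally, \eqref{5} follows since $f(\Phi(A))\sharp_v f(\Phi(B))\le f(\Phi(A))\nabla_v f(\Phi(B))\le\Phi(f(A))\nabla_v\Phi(f(B))$ by AM--GM and Choi--Davis for the operator convex function $f$, and \eqref{6} follows by chaining the first two inequalities of \eqref{20}; thus \eqref{20} is the only statement that genuinely needs proving.
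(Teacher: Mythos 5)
Your proof is correct and follows essentially the same route as the paper: the crucial middle inequality $f\left( \Phi \left( A \right) \right){{\sharp}_{v}}f\left( \Phi \left( B \right) \right)\le \Phi \left( f\left( A \right){{\sharp}_{v}}f\left( B \right) \right)$ is obtained, exactly as in the paper's proof, by passing to the operator monotone (hence operator concave) reciprocal $g=1/f$, applying the Choi--Davis--Jensen inequality, monotonicity of ${{\sharp}_{v}}$ and Ando's inequality to $g$, and then inverting. The only differences are organizational: you derive the characterization \eqref{4} directly from the concavity of $g$ instead of quoting it from Ando and Hiai, and you deduce \eqref{5} and \eqref{6} from \eqref{20} by transitivity rather than proving them separately, which is slightly more economical.
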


	\begin{proof}
		As Ando and Hiai mentioned in \cite[(2.16)]{1}, the function $f$ is an operator monotone decreasing if and only if 
		\begin{equation}\label{4}
		f\left( A{{\nabla }_{v}}B \right)\le f\left( A \right){{\sharp}_{v}}f\left( B \right).
		\end{equation}
		We emphasize here that if $f$ satisfies in \eqref{4}, then is operator convex (this class of functions is called {\it operator log-convex}). It is easily verified that if $Sp\left( A \right),Sp\left( B \right)\subseteq J$, then $Sp\left( \Phi \left( A \right) \right),Sp\left( \Phi \left( B \right) \right)\subseteq J$. So we can replace $A,B$ by $\Phi \left( A \right),\Phi \left( B \right)$ in \eqref{4}, respectively. Therefore we can write
		\[\begin{aligned}
		& f\left( \Phi \left( A{{\nabla }_{v}}B \right) \right)\\ &\le f\left( \Phi \left( A \right) \right){{\sharp}_{v}}f\left( \Phi \left( B \right) \right) \\ 
		& \le \Phi \left( f\left( A \right) \right){{\sharp}_{v}}\Phi \left( f\left( B \right) \right) \quad \text{(by  Choi-Davis-Jensen inequality and monotonicity property of mean)} \\ 
		& \le \Phi \left( f\left( A \right){{\nabla }_{v}}f\left( B \right) \right)  \quad \text{(by AM-GM inequality)}.
		\end{aligned}\]
		This completes the proof of the inequality \eqref{5}. To prove the inequality \eqref{6}, note that if $Sp\left( A \right),Sp\left( B \right)\subseteq J$, then $Sp\left( A{{\nabla }_{v}}B \right)\subseteq J$. By computation
		\[\begin{aligned}
		f\left( \Phi \left( A{{\nabla }_{v}}B \right) \right)&\le \Phi \left( f\left( A{{\nabla }_{v}}B \right) \right) \quad \text{(by Choi-Davis-Jensen inequality)}\\ 
		& \le \Phi \left( f\left( A \right){{\sharp}_{v}}f\left( B \right) \right) \quad \text{(by \eqref{4})}\\ 
		& \le \Phi \left( f\left( A \right) \right){{\sharp}_{v}}\Phi \left( f\left( B \right) \right) \quad \text{(by Ando's inequality \cite[Theorem 3]{3})}\\ 
		& \le \Phi \left( f\left( A \right){{\nabla }_{v}}f\left( B \right) \right)  \quad \text{(by AM-GM inequality)},
		\end{aligned}\]
		proving the inequality \eqref{6}. We know that if $g$ is operator monotone on $\left( 0,\infty  \right)$, then $g$  is operator concave. As before, it can be shown that
		\[g\left( \Phi \left( A \right) \right){{\sharp}_{v}}g\left( \Phi \left( B \right) \right)\ge \Phi \left( g\left( A \right) \right){{\sharp}_{v}}\Phi \left( g\left( B \right) \right)\ge \Phi \left( g\left( A \right){{\sharp}_{v}}g\left( B \right) \right).\]
		Taking the inverse, we get
		\[g{{\left( \Phi \left( A \right) \right)}^{-1}}{{\sharp}_{v}}g{{\left( \Phi \left( B \right) \right)}^{-1}}\le \Phi {{\left( g\left( A \right){{\sharp}_{v}}g\left( B \right) \right)}^{-1}}\le \Phi \left( g{{\left( A \right)}^{-1}}{{\sharp}_{v}}g{{\left( B \right)}^{-1}} \right).\]
		If $g$ is operator monotone, then $f=\frac{1}{g}$ is operator monotone decreasing, we conclude
		\[f\left( \Phi \left( A \right) \right){{\sharp}_{v}}f\left( \Phi \left( B \right) \right)\le \Phi \left( f\left( A \right){{\sharp}_{v}}f\left( B \right) \right).\] 
		This proves \eqref{20}. 
	\end{proof}
	We are now in a position to present a proof of Theorem \ref{18}.
	\vskip 0.4 true cm
	\noindent {\it Proof of Theorem 3.2.}
	It is well-known that the function $f\left( t \right)={{t}^{r}}$ on $\left( 0,\infty  \right)$ is operator monotone decreasing for $r\in \left[ -1,0 \right]$. It implies that the function $f\left( t \right)={{\left( 1-t \right)}^{r}}$ on $\left( 0,1 \right)$ is operator monotone decreasing too. By applying Lemma \ref{3}, we get the desired result.  $\hfill\square$
	\vskip 0.5 true cm
	\noindent{\bf Acknowledgements.}
	The authors are thankful to the referee(s) for the useful comments and suggestions. The authors would also like to thank Professor Shigeru Furuichi for fruitful discussions.
	\bibliographystyle{alpha}
	
	\vskip 0.4 true cm
	
	\tiny(S. Sheybani) Department of Mathematics, Mashhad Branch, Islamic Azad University, Mashhad, Iran.
	
	{\it E-mail address:} shiva.sheybani95@gmail.com
	
		\vskip 0.4 true cm	
	
	(M.E. Omidvar) Department of Mathematics, Mashhad Branch, Islamic Azad University, Mashhad, Iran.
	
	{\it E-mail address:} erfanian@mshdiau.ac.ir
	
	\vskip 0.4 true cm
	
	\tiny(H.R. Moradi) Young Researchers and Elite Club, Mashhad Branch, Islamic Azad University, Mashhad, Iran.
	
	{\it E-mail address:} hrmoradi@mshdiau.ac.ir
	
\end{document}